\newcommand{\cB}{\mathcal B}
\newcommand{\N}{\mathbb{N}}
\newcommand{\Z}{\mathbb{Z}}
\newcommand{\set}[1]{\left\{ #1 \right\}}
\newtheorem{thm}{Theorem}[section]
\newtheorem{thm*}{Theorem}
\newtheorem{theorem}{Theorem}[section]
\newtheorem{theorem*}{Theorem}
\newtheorem{lem}[thm]{Lemma}
\newtheorem{proposition}[thm]{Proposition}
\theoremstyle{definition}
\newtheorem{defn*}{Definition}
\newtheorem{definition}{Definition}[section]
\newtheorem{definition-st}{Definition}
\newtheorem{example}{Example}[section]
\newtheorem{example-st}{Example}
\newtheorem{rem-st}{Remark}
\newtheorem{remark}{Remark}[section]
\newtheorem{remark-st}{Remark}
\numberwithin{equation}{section}
\numberwithin{figure}{section}
\newtheorem{cor*}{Corollary}
\newtheorem{prop*}{Proposition}
\newtheorem{obser-st}{Observation}
\newtheorem{question}[thm]{Question}
\theoremstyle{definition}
\newtheorem{exmp-st}{Example}
\newtheorem{exmps-st}{Examples}
\theoremstyle{remark}
\newtheorem{rems-st}{Remarks} 
\newtheorem{ack-st}{Acknowledgment}
\newcommand{\Words}{\cB}
\begin{document}
\title{On continuing codes}
\author{Jisang Yoo}
\address{Ajou University, Suwon, South Korea}
\email{(replace X with my last name) jisang.X.ac+rs12@gmail.com}
\keywords{eresolving, right continuing code, infinite-to-one code, SFT, factor map}
\subjclass[2010]{Primary: 37B10} 

\begin{abstract}
We investigate what happens when we try to work with continuing block codes (i.e. left or right continuing factor maps) between shift spaces that may not be shifts of finite type. For example, we demonstrate that continuing block codes on strictly sofic shifts do not behave as well as those on shifts of finite type; a continuing block code on a sofic shift need not have a uniformly bounded retract, unlike one on a shift of finite type. A right eresolving code on a sofic shift can display any behavior arbitrary block codes can have. We also show that a right continuing factor of a shift of finite type is always a shift of finite type.
\end{abstract}


\maketitle
\tableofcontents

\section{Introduction}

There has been many studies on sliding block codes between shift spaces. They form bases for further studies on topics such as hidden Markov chains, cellular automata, and relative thermodynamic formalism. Sliding block codes on shifts of finite type can be divided into two categories: finite-to-one codes and infinite-to-one codes. The simplest class in the first category is the class of right closing codes. Many things are known about right closing codes between shift spaces \cite{BMT-ResolvingDimensionGroup,LM}.

Right continuing codes are a natural generalization of right resolving codes and right closing codes. Right continuing codes are infinite-to-one analog of right closing codes.

Right continuing codes are less well understood than right closing codes are, but some nice properties of such codes are known. For example, continuing codes between SFTs work nice with Markov measures;
if $\phi$ is a continuing code from an SFT $X$ onto another SFT $Y$, each Markov measure on $Y$ lifts to uncountably many Markov measures on $X$ (see \cite{BT-ITO}); 
if $\phi$ is a fiber-mixing code from an SFT $X$ onto another SFT $Y$, i.e., given $x, \bar{x} \in X$ with $\phi (x) = \phi (\bar{x}) = y \in Y$, there is $z \in \phi^{-1} (y)$ that is left asymptotic to $x$ and right asymptotic to $\bar{x}$, then $\phi$ projects each Markov measure on $X$ onto a Gibbs measure on $Y$ with a H\"older continuous potential (see \cite{CU-ProjectionGibbsian, yoogibbs}). (Fiber-mixing condition implies that $\phi$ is left continuing and right continuing)
It is also known that one can always construct a right continuing code from a given irreducible sofic shift onto a given irreducible SFT of lower entropy (see \cite{J-ExistenceBiContinuing}).

In this paper, we investigate how right continuing codes behave with respect to shift spaces that are not SFTs. We show that right continuing codes defined on general shift spaces don't necessarily have a retract. But when it does have a retract, we show that it can be assumed to be zero via recoding.
We show that there is no right continuing codes from an SFT onto a non-SFT.
We also learn that the property of right eresolving codes on non-SFTs are not strong enough to tame block codes in any meaningful way, in particular, the property of  right eresolving doesn't even imply the property of right continuing.

We hope that this work will be helpful in understanding right continuing codes and infinite-to-one codes on general shift spaces.

\section{Preliminary}

An \emph{SFT} or a shift of finite type is a shift space determined by a finite set of forbidden words. An N-step SFT is an SFT determined by a set of forbidden words of length $N+1$. It is known that any SFT is topologically conjugate to a 1-step SFT.  A \emph{non-SFT} is a shift space that is not an SFT. The shift map will be denoted as $\sigma$.

A \emph{(sliding) block code} is a homomorphism between two topological dynamical systems that are shift spaces. A block code $\phi$ is a 1-block code if $(\phi(x))_0$  is determined by $x_0$. It is known that any block code can be assumed to be a 1-block code without loss of generality via recoding, i.e. any block code is topologically conjugate to a 1-block code. It is also known that any block code on a SFT can be recoded to be a 1-block code on a 1-step SFT. A block code from $X$ \emph{onto} $Y$ is called a \emph{factor map} between $X$ and $Y$, and $Y$ is called a \emph{factor} of $X$. Factors of SFTs are called \emph{sofic shifts}.

\begin{definition}
  The language $\Words(X)$ of a shift space $X$ is the collection of words appearing in points of $X$. The set $\Words_n(X)$ is defined to be the collection of words of length $n$ appearing in points of $X$. In other words, $\Words_n(X) = \{x_{[0,n-1]} : x \in X\}$.
\end{definition}

\begin{definition}
  Two points $x,x'$ in a shift space are \emph{left asymptotic} (resp. \emph{right asymptotic}) to each other if there is $n\in\Z$ with $x_{(-\infty,n]} = x'_{(-\infty,n]}$ (resp. $ x_{[n,\infty)} = x'_{[n,\infty)} $), or equivalently if $d(\sigma^{-k}x, \sigma^{-k}x') \to 0$ (resp. $d(\sigma^kx, \sigma^kx') \to 0$) as $k \to \infty$.
\end{definition}

\begin{definition}
  A 1-block code $\phi$ from a shift space $X$ onto another shift space $Y$ is \emph{right eresolving} if for $a_0 \in \Words_1(X)$ and $b_0b_1 \in \Words_2(Y)$ with $\phi(a_0) = b_0$, there is at least one $a_1$ with $a_0a_1\in\Words_2(X)$ and $\phi(a_1)=b_1$.
\end{definition}

\begin{definition}
  A block code $\phi$ from a shift space $X$ onto another shift space $Y$ is \emph{right continuing} or \emph{u-eresolving} if for each $x \in X$ and $y\in Y$ such that $\phi(x)$ is left-asymptotic to $y$, there is at least one $\bar x \in X$ such that $\bar x$ is left asymptotic to $x$ and $\phi(\bar x)=y$.
\end{definition}

The terms \emph{left eresolving, left continuing} and \emph{s-eresolving} are defined analogously.

\begin{definition}
  A  block code from a shift space onto another shift space is \emph{continuing} if it is left continuing or right continuing.
\end{definition}

\begin{definition}
  A right continuing code $\phi$ from a shift space $X$  onto another shift space $Y$ is said to have a \emph{retract} $n \in \N$ if for all $x\in X$ and $y \in Y$
  with $(\phi x)_i = y_i$ for all $i \le 0$, there is $\bar x \in X$
  with $\phi\bar x = y$ and $\bar x_i = x_i$ for all $i \le -n$.
\end{definition}

We mention a result \cite{BT-ITO} on a relationship between right eresolving codes and right continuing codes on SFTs. (The term \emph{right resolving} in the paper \cite{BT-ITO} corresponds to the term \emph{right eresolving} in this paper)

\begin{proposition}\label{prop:right-continuing}
  For a factor map $\phi$ between SFTs, the following three conditions are equivalent. 
  \begin{enumerate}
  \item $\phi$ is topologically equivalent to a right eresolving 1-block factor map between 1-step SFTs.
  \item $\phi$ is right continuing.
  \item $\phi$ is right continuing and has a retract.
  \end{enumerate}
\end{proposition}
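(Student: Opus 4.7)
The plan is to prove the cycle $(1) \Rightarrow (3) \Rightarrow (2) \Rightarrow (1)$. The implication $(3) \Rightarrow (2)$ is immediate. For $(1) \Rightarrow (3)$, a right eresolving $1$-block factor map between $1$-step SFTs has retract $0$: given $x \in X$ and $y \in Y$ with $\phi(x)_i = y_i$ for all $i \leq 0$, set $\bar x_i = x_i$ for $i \leq 0$ and construct $\bar x_1, \bar x_2, \ldots$ inductively by applying the right eresolving property to the last chosen symbol $\bar x_{i-1}$ and the target $y_i$, so that $\bar x_{i-1} \bar x_i \in \Words_2(X)$ and $\phi(\bar x_i) = y_i$.

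For $(2) \Rightarrow (1)$, I would first recode so that $X$ and $Y$ are $1$-step SFTs and $\phi$ is a $1$-block code; this preserves all three conditions. The key substep is $(2) \Rightarrow (3)$. Consider the compact set $A = \{(x, y) \in X \times Y : \phi(x)_i = y_i \text{ for all } i \leq 0\}$, and for each $(x, y) \in A$ let $n(x, y)$ be the least $m$ for which some $\bar x \in X$ has $\bar x_i = x_i$ for $i \leq -m$ and $\phi \bar x = y$; this is pointwise finite by right continuing. I would show $n$ is bounded on $A$ by combining sequential compactness of $A$ with the finite-state nature of the problem: the truth of ``$n(x, y) \leq m$'' depends only on the vertex $x_{-m}$ of the graph of $X$ and on the right-infinite sequence $y|_{\geq -m+1}$, and for each vertex $v$ of $X$ the set $F(v)$ of $\phi$-label sequences produced by infinite paths starting at $v$ is closed and determined by a finite automaton. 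A K\"onig's-lemma-style analysis on the finite graph, together with sequential compactness, then yields a uniform bound $N$.

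With retract $N$ in hand, the final step is to recode $\phi$ so that it is right eresolving. Naive $(N{+}1)$-higher block recoding does not reduce the retract, so I would instead perform a state splitting of $X$ (together with a matching higher-block recoding of $Y$) in which each new vertex of $X$ carries enough of the retract-witness data that, given any admissible next $Y$-symbol, a compatible next $X$-symbol is forced; this produces a right eresolving $1$-block code between $1$-step SFTs topologically equivalent to $\phi$.

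The main obstacle is the uniform retract extraction in $(2) \Rightarrow (3)$. The naive argument of picking $(x^{(m)}, y^{(m)}) \in A$ violating retract depth $m$ and passing to a limit does not close directly, since the subsets of $A$ of pairs with retract depth at most $m$ are closed and increasing, with union $A$, but need not exhaust $A$ at any finite stage in the absence of extra structure. Exploiting that both $X$ and $Y$ are SFTs, and hence that realizability of $\phi$-labels from a given vertex is a finite-state reachability question, is what lets the pointwise-finite retract data be upgraded to a uniform bound.
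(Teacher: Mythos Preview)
The paper does not prove this proposition; it is quoted from Boyle--Tuncel, so there is no in-paper proof to compare your proposal against. Two remarks on your outline are nonetheless worth making.

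First, the step you single out as the main obstacle, $(2)\Rightarrow(3)$, is in fact carried out later in the paper (Lemma~\ref{lem:i} together with the sentence following it), and the argument is precisely the ``naive'' compactness one you dismiss. It uses only that $X$ is a $1$-step SFT; no SFT hypothesis on $Y$ is needed. If no uniform retract exists, choose witnesses $(x^{(n)},y^{(n)})$ violating retract $n$, pass to a limit $(x,y)$, apply right-continuing to get $\bar x$ with $\phi\bar x=y$ and $\bar x_i=x_i$ for $i\le-\bar n$, and then for $n$ large enough that $(x^{(n)},y^{(n)})$ agrees with $(x,y)$ on $[-\bar n,1]$, the splice $x^{(n)}_{[-n,-\bar n-1]}\bar x_{[-\bar n,1]}$ lies in $\cB(X)$ (this is where the $1$-step hypothesis on $X$ enters) and contradicts the choice of $(x^{(n)},y^{(n)})$. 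No K\"onig's-lemma or finite-automaton analysis of $Y$ is required, and your claim that both $X$ and $Y$ must be SFTs for this step is incorrect.

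Second, your $(1)\Rightarrow(3)$ is fine, but your final recoding from a code with a retract to a right eresolving presentation is only a gesture. The paper's section on decreasing the retract supplies an explicit conjugacy $(\psi x)_i=(x_{i-R},(\phi x)_{[i-R,i]})$ that brings the retract down to $0$; once both sides are arranged as $1$-step SFTs, a $1$-block code with retract $0$ between $1$-step SFTs is automatically right eresolving (extend $a_0$ to a point of $X$, use that $Y$ is $1$-step to build a compatible $y$, apply retract $0$). So the construction is more concrete than ``carry enough retract-witness data,'' and this step is not where the difficulty lies.
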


After we prove that the right continuing factor of an SFT is always an SFT, the requirement of the codomain being an SFT from the above proposition can be safely dropped.

\section{A right continuing code without a retract}

This example and the proof are also included in \cite{J-ExistenceBiContinuing} by personal communication.\footnote{That paper needed the result before this paper's draft were written.}

\begin{example}
  Let $X$ be a shift space with the alphabet $\{1,\bar{1},2,3\}$
  defined by forbidden blocks $\{\bar{1}2^n 3 : n \ge 0 \}$ and let
  $Y$ be the full shift $\{1,2,3\}^\mathbb{Z}$. Then $X$ is an irreducible sofic shift. Let $\phi : X \to Y$
  be the 1-block code defined by $\phi(1) = \phi(\bar{1}) = 1,\,
  \phi(2)=2, \, \phi(3)=3$. Then the map $\phi$ is a right-continuing factor map without a retract.
\end{example}
  \begin{proof}
    The map $\phi$ is clearly onto $Y$.

    Next, we need to check that it is right continuing. Suppose $x\in X$ and $y\in Y$ with
    $\phi(x_i)=y_i$ for all $i\le 0$.  If $x^{(1)} :=
    x_{(-\infty,-1]}.x_0y_{[1,\infty)}$ is in $X$, then we are done
    because $x^{(1)}$ is left-asymptotic to $x$ and $\phi(x^{(1)}) =
    y$.  Otherwise, the word $\bar{1}2^n 3$ for some $n \ge 0$ occurs
    exactly once in $x^{(1)}$.  Let $x^{(2)}$ be the point obtained
    from $x^{(1)}$ by replacing all occurrences of $\bar{1}2^n 3$ with $12^n 3$.  Then the
    point $x^{(2)}$ is in $X$, is left-asymptotic to $x$ and satisfies
    $\phi(x^{(2)}) = y$. This completes the proof of the right continuing property.

    Finally, we check that the map $\phi$ has no retract. This follows from considering, for each $n\ge0$, the points $x =
    \bar{1}^\infty 2^n . 2 2^\infty \in X$ and $y = 1^\infty 2^n . 2
    3^\infty \in Y$.
  \end{proof}

\section{Right eresolving codes on non-SFTs}

Consider an arbitrary 1-block code $\phi$ from a shift space $X$ onto another shift space $Y$. One can transform $\phi: X \to Y$ into a right eresolving code $\sqrt \phi : \sqrt X \to \sqrt Y$ without losing too much of the original code's properties.

First, we choose a symbol $a$  disjoint from the alphabets of $X$ and $Y$. Then define shift spaces $\sqrt X = \set{\eta(x), \sigma(\eta(x)) : x \in X } $ and  $\sqrt Y = \set{\eta(y), \sigma(\eta(y)) : y \in Y } $ where $\eta$ is defined by $ \eta(x)_{2i} = x_i$ and $\eta(x)_{2i+1} = a $ for all $i \in \Z$. 
Then define a 1-block code $\sqrt \phi$ from $\sqrt X$ onto $\sqrt Y$ by 

\[ \sqrt\phi(c) = 
\begin{cases}
  a & \text{if $c=a$,} \\
  \phi(a) & \text{if $c\neq a$}
\end{cases}
\]

for $c \in \Words_1(\sqrt X)$.

The map $\sqrt \phi$ is easily seen to be right eresolving and shares many properties with the original $\phi$. For example, if $\phi$ is not right continuing, then neither is $\sqrt \phi$. Thus right eresolving codes are not necessarily right continuing. This is in contrast with Proposition~\ref{prop:right-continuing} in the case of SFTs.

\section{A right continuing factor of an SFT is an SFT}

A right eresolving factor of 1-step SFT is easily seen to be a 1-step SFT. Since a right continuing codes between SFTs can be assumed to be right eresolving by recoding (see Proposition~\ref{prop:right-continuing}), one might think that this would easily imply that the right continuing factor of an SFT is an SFT. But the proof of Proposition~\ref{prop:right-continuing} uses the assumption of $Y$ being an SFT, which in turns is what we need to prove. Nevertheless, we manage to prove the following result.

\begin{thm}\label{thm:rc-sft-sft}
 If $X$ is an SFT and $\phi$ a right continuing code onto a shift space $Y$. Then $Y$ is an SFT.
\end{thm}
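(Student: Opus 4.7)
My plan is to show $Y$ is an SFT in two stages: first establish a finite retract for $\phi$, then use the retract to pass to a recoding in which $\phi$ becomes right eresolving between $1$-step SFTs, from which the conclusion is immediate. By standard recoding I assume $X$ is a $1$-step SFT and $\phi$ is a $1$-block code; these operations preserve the right continuing hypothesis and preserve the conclusion that $Y$ is an SFT.

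For the retract, set $C = \setst{(x,y) \in X \times Y}{(\phi x)_i = y_i \text{ for all } i \le 0}$ and $C_r = \setst{(x,y) \in C}{\exists \bar x \in X,\ \bar x_i = x_i \text{ for } i \le -r \text{ and } \phi(\bar x) = y}$. Then $C$ is compact, each $C_r$ is closed (by compactness of $X$ and continuity of $\phi$), and right continuing gives $C = \bigcup_r C_r$. I want $C = C_r$ for a single $r$, which does not follow from naive compactness since an increasing union of closed subsets of a compact space can fail to stabilize. My strategy is to argue by contradiction: a hypothetical $(x_n, y_n) \in C \setminus C_n$ has a subsequential limit $(x^*, y^*) \in C$ and a retract witness $\bar x^*$ supplied by right continuing; I then splice $x_n$'s far past onto $\bar x^*$ using the $1$-step gluing property of $X$, and invoke right continuing a second time to correct the right-hand projection from $y^*$ to $y_n$. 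The composite should produce a retract-$n$ lift for $(x_n, y_n)$ when $n$ is large, contradicting the choice of the sequence. Carrying this splice-and-correct step with uniform control is the principal technical obstacle.

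Granted a retract $r$, I pass to the $(r+1)$-block recoding $\phi^{[r+1]} : X^{[r+1]} \to Y^{[r+1]}$. I verify it is right eresolving as follows: given $\alpha \in \Words_{r+1}(X)$ and $b \in \mathcal A_Y$ with $\phi(\alpha)b \in \Words(Y)$, one picks a bi-infinite $X$-extension $x$ of $\alpha$ and a $Y$-point $y$ extending $\phi(\alpha)b$ with $(\phi x)_i = y_i$ for $i \le 0$ (arguing existence by a further compactness argument on lifts of increasing windows); the retract then supplies a lift $\bar x$ of $y$ with $\bar x_i = x_i$ for $i \le -r$, and the first $X$-symbol of $\bar x$ immediately after $\alpha$ is an $a' \in \mathcal A_X$ with $\alpha a' \in \Words_{r+2}(X)$ and $\phi(a') = b$. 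Finally, a right eresolving $1$-block factor map from a $1$-step SFT onto a shift space $Z$ forces $Z$ to be the $1$-step SFT determined by the image alphabet and image $2$-block transitions: any candidate $z$ lifts by choosing an $X$-preimage of $z_0$, extending to the right by right eresolving, and producing the bi-infinite lift by compactness applied to the finite $X$-lifts of the windows $z_{[-n,n]}$. Applied to $\phi^{[r+1]}$, this yields that $Y^{[r+1]}$ is a $1$-step SFT, so $Y$ itself is an SFT.
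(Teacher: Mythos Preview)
Your second stage has a genuine gap, and it is precisely the trap the paper flags in the Remark after its second lemma. To show $\phi^{[r+1]}$ is right eresolving you need: given $\alpha \in \Words_{r+1}(X)$ and $b \in \Words_1(Y)$ with $\phi(\alpha)b \in \Words(Y)$, produce $a'$ with $\alpha a' \in \Words(X)$ and $\phi(a') = b$. You propose to find $x \in X$ with $x_{[-r,0]} = \alpha$ and $y \in Y$ with $y_{[-r,1]} = \phi(\alpha)b$ satisfying $(\phi x)_{(-\infty,0]} = y_{(-\infty,0]}$, then apply the retract. But no such compatible pair need exist. If you first fix $x$ extending $\alpha$, you then need $(\phi x)_{(-\infty,0]}\,b$ to extend to a point of $Y$, which is exactly the finite-memory property of $Y$ you are trying to prove. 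If instead you fix some $y \in Y$ extending $\phi(\alpha)b$ and lift $y_{(-\infty,0]}$ by compactness, the resulting $x$ satisfies $\phi(x_{[-r,0]}) = \phi(\alpha)$ but not necessarily $x_{[-r,0]} = \alpha$, so applying the retract does not pin down the required successor of $\alpha$. Your ``compactness on lifts of increasing windows'' cannot bridge this without circularity. The paper breaks the circle by taking block length $K = R + |\Words_1(X)|^2 + 2$ rather than $R+1$: the extra $|\Words_1(X)|^2 + 1$ symbols force, via pigeonhole on a pair of $X$-preimages, a repeated symbol pair to the left of position $-R$, from which one manufactures \emph{periodic} left tails for $x$ and $y$ simultaneously, guaranteeing a compatible bi-infinite pair on which the retract can be applied.

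A lesser issue: in your retract argument, invoking right continuing a second time to ``correct $y^*$ to $y_n$'' gives no bound on how far left the second lift may alter the spliced point, so the contradiction does not close as stated. The paper's Lemma~\ref{lem:i} avoids this by proving only a one-symbol extension property and then iterating; with that formulation the splice $x^{(n)}_{[-n,-\bar n - 1]}\bar x_{[-\bar n,1]}$ already matches $y^{(n)}$ on the relevant window, and no second appeal to right continuing is needed.
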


Throughout this section, we assume that $\phi$ is a right continuing 1-block code from a 1-step SFT $X$ onto a shift space $Y$ unless stated otherwise. We cannot assume without loss of generality that $\phi$ is eresolving to make things easier at this point. We will not assume the retract to be 0 because we want to see how the memory of $Y$ grows depending on the size of the retract. We prove the theorem through a series of lemmas.

\begin{lem}\label{lem:i}
  There is $R \in \mathbb{N}$ such that for all $x\in X$ and $y \in Y$
  with $\phi(x_i)=y_i,\, i\le0$, there exist $x'_{[-R,1]} \in \mathcal
  B (X)$ with $x'_{-R} = x_{-R}$ and $\phi(x'_i) = y_i,\, i\in
  [-R,1]$.
\end{lem}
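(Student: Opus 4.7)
The plan is to argue by compactness and contradiction, using the right continuing hypothesis at a limit point. Suppose no such $R$ exists. Then for each $n \in \N$ I can pick $x^{(n)} \in X$ and $y^{(n)} \in Y$ with $\phi(x^{(n)}_i) = y^{(n)}_i$ for all $i \le 0$, such that no word $x'_{[-n,1]} \in \Words(X)$ satisfies both $x'_{-n} = x^{(n)}_{-n}$ and $\phi(x'_i) = y^{(n)}_i$ on $[-n,1]$.

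By compactness of $X \times Y$, I can pass to a subsequence with $x^{(n)} \to x \in X$ and $y^{(n)} \to y \in Y$. The condition $\phi(x^{(n)}_i) = y^{(n)}_i$ for $i \le 0$ is closed, so $\phi(x)$ is left asymptotic to $y$. Applying the right continuing property to $(x,y)$ yields $\bar x \in X$ left asymptotic to $x$ with $\phi(\bar x) = y$; fix $N$ so that $\bar x_i = x_i$ for all $i \le -N$.

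Now I would choose $n$ so large that $x^{(n)}_{-N} = x_{-N}$ and $y^{(n)}_i = y_i$ for $i \in [-N,1]$ (both possible by convergence in the product topology), and then stitch together a witness by setting $x'_i = x^{(n)}_i$ for $i \in [-n,-N]$ and $x'_i = \bar x_i$ for $i \in [-N,1]$. At the seam $i = -N$ we have $x^{(n)}_{-N} = x_{-N} = \bar x_{-N}$, so since $X$ is a \emph{$1$-step} SFT, the concatenation lies in $\Words(X)$. The image condition $\phi(x'_i) = y^{(n)}_i$ splits into two pieces: on $[-n,-N]$ it follows from $\phi(x^{(n)}) = y^{(n)}$ on coordinates $\le 0$; on $[-N,1]$ it follows from $\phi(\bar x) = y$ combined with $y^{(n)} = y$ there. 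Since also $x'_{-n} = x^{(n)}_{-n}$, this contradicts the choice of $(x^{(n)}, y^{(n)})$.

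The main obstacle is the seam: the argument depends essentially on $X$ being a $1$-step SFT so that pointwise agreement at $-N$ suffices to concatenate a prefix of $x^{(n)}$ with a suffix of $\bar x$. Arranging that seam matches is the reason for selecting $N$ from the left-asymptotic data for $\bar x$ and then choosing $n$ large enough for $x^{(n)}$ to have caught up with $x$ at coordinate $-N$; everything else is routine bookkeeping.
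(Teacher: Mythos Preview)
Your proof is correct and follows essentially the same approach as the paper's: contradiction, a compactness limit $(x,y)$, application of right continuing to obtain $\bar x$ agreeing with $x$ on a left tail, and then stitching a prefix of $x^{(n)}$ to a segment of $\bar x$ using the $1$-step SFT property at the seam. The only cosmetic difference is that the paper asks for agreement $x^{(n)}_i = x_i$ on the whole window $[-N,1]$ rather than just at the single seam coordinate, and it is worth making explicit that you also take $n \ge N$ so that the interval $[-n,-N]$ is nonempty.
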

\begin{proof}
  Suppose it is false. Then for all $n \in \mathbb N$, there exist $x^{(n)}
  \in X$ and $y^{(n)} \in Y$ such that $\phi(x^{(n)}_i) = y^{(n)}_i,\,
  i\le0$ and that there does not exist a word $x'_{[-n,1]} \in
  \mathcal B (X)$ satisfying 

  \[(*) \quad x^{(n)}_{-n} = x'_{-n},\quad
  \phi(x'_{[-n,1]}) = y^{(n)}_{[-n,1]}.\]

  By compactness argument, there is a limit point $(x,y) \in X\times Y$  of the sequence of pairs
  $(x^{(n)}, y^{(n)})$. We have $\phi(x_i) = y_i,\, i\le0$ and since
  $\phi$ is right continuing, there are $\bar n \in \mathbb N$ and
  $\bar x \in X$ such that $\phi(\bar x) = y$ and $\bar x_i = x_i,\,
  i\le -\bar n$. We can choose $n\in \mathbb N$ such that $n > \bar n$
  and $x_i = x^{(n)}_i,\, y_i = y^{(n)}_i,\, i\in[-\bar n, 1]$.  But
  the word $x'_{[-n,1]} := x^{(n)}_{[-n,-\bar n -1]}\bar x_{[-\bar
    n,1]}$, which is in $\Words(X)$ because $X$ is a 1-step SFT, satisfies (*) and that contradicts our initial assumption.
\end{proof}

Since $X$ is assumed to be a 1-step SFT, it is easily shown by repeated application of the previous lemma that the number $R$ is a retract of $\phi$.

\begin{lem}
  There is $K \in \mathbb N$ such that for all $\pi \in \mathcal
  B_K(X)$ and $u \in \mathcal B_{K+1}(Y)$ with $\phi(\pi)$ being a prefix
  of $u$, there exist $\pi' \in \mathcal B_{K+1}(X)$ with
  $\pi_1 = \pi'_1$ and $\phi(\pi') = u$.
\end{lem}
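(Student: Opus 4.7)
The plan is to argue by contradiction, combining compactness with a splice that imitates the proof of Lemma \ref{lem:i}. The core difficulty is that we cannot simply take extensions $x \in X$ of $\pi$ and $y \in Y$ of $u$ and then demand $\phi(x_i) = y_i$ for every $i \le 0$: with $Y$ only a shift space and not yet known to be an SFT, there is no reason the left tail of $\phi(x)$ can be glued to $u$ inside $Y$. Compactness will instead produce such an aligned pair $(x, y)$ as a limit.

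I would suppose the lemma fails, so that for every $n \ge 1$ there exist $\pi^{(n)} \in \mathcal{B}_n(X)$ and $u^{(n)} \in \mathcal{B}_{n+1}(Y)$ with $\phi(\pi^{(n)}) = u^{(n)}_{[1,n]}$ but no admissible $\pi' \in \mathcal{B}_{n+1}(X)$ starting with $\pi^{(n)}_1$ whose image is $u^{(n)}$. Lift each word to a point and shift by $\sigma^n$ so that, after relabelling, $\pi^{(n)}$ sits at positions $[1-n, 0]$ of some $x^{(n)} \in X$ and $u^{(n)}$ at positions $[1-n, 1]$ of some $y^{(n)} \in Y$. Pass to a convergent subsequence $(x^{(n_k)}, y^{(n_k)}) \to (x, y) \in X \times Y$; since at every fixed index $i \le 0$ the matching $\phi(x^{(n_k)}_i) = y^{(n_k)}_i$ holds for all large $k$, one obtains $\phi(x_i) = y_i$ for every $i \le 0$.

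Next I would feed $(x,y)$ into Lemma \ref{lem:i} to produce $x'_{[-R,1]} \in \mathcal{B}(X)$ with $x'_{-R} = x_{-R}$ and $\phi(x'_i) = y_i$ on $[-R,1]$. Choose $k$ large enough that $n_k > R$ and that $x^{(n_k)}$ and $y^{(n_k)}$ agree with $(x,y)$ throughout the fixed finite window $[-R,1]$. Define
\[
  \pi'_i := \begin{cases} x^{(n_k)}_i, & i \in [1 - n_k, -R], \\ x'_i, & i \in [-R, 1]. \end{cases}
\]
The two pieces agree at $i = -R$ (both equal $x_{-R}$), so the $1$-step SFT property of $X$ gives $\pi' \in \mathcal{B}_{n_k+1}(X)$. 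Its leftmost letter is $\pi^{(n_k)}_1$ by construction, and on each piece $\phi(\pi'_i)$ equals $u^{(n_k)}_i$ coordinatewise (on $[1-n_k,-R]$ by the original matching, on $[-R,1]$ by Lemma \ref{lem:i} combined with $y_i = y^{(n_k)}_i = u^{(n_k)}_i$). Hence $\phi(\pi') = u^{(n_k)}$, contradicting the failure of the lemma at $K = n_k$.

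The main obstacle is extracting the aligned infinite pair $(x, y)$; once compactness handles that, Lemma \ref{lem:i} together with the $1$-step nature of $X$ absorbs the routine work of splicing and checking admissibility.
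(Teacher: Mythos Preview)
Your compactness argument is correct, but it is a genuinely different route from the paper's. The paper proves the lemma \emph{constructively}: it sets $K = d + R + 1$ with $d = |\mathcal B_1(X)|^2 + 1$, chooses any $\phi$-preimage $\bar x_{[-R-d,1]}$ of $u$, and applies the pigeonhole principle to the pairs $(x_i,\bar x_i)$ on the window $[-R-d,-R-1]$ to find $I<J$ with $x_I=x_J$ and $\bar x_I=\bar x_J$. This repetition lets it manufacture, by periodic extension to the left, actual points $\hat x \in X$ and $\hat y \in Y$ with $\phi(\hat x_i)=\hat y_i$ for all $i\le 0$ and with $\hat x_{[-R,0]}=x_{[-R,0]}$, $\hat y_{[-R,1]}=y_{[-R,1]}$; Lemma~\ref{lem:i} then applies directly and a single splice finishes. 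Your proof instead repeats the compactness-and-contradiction pattern of Lemma~\ref{lem:i} itself, obtaining the aligned pair $(x,y)$ as a limit rather than by explicit construction. Your approach is arguably cleaner and more uniform with the preceding lemma; the paper's approach buys the explicit bound $K = R + 2 + |\mathcal B_1(X)|^2$, which is exactly the quantity referred to in the Question at the end of the section.
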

\begin{proof}
  Let $d := (|\mathcal B_1(X)|)^2 +1$ and $ K := d+R+1$ and consider two words
  $x_{[-R-d,0]} \in \mathcal B_K(X)$ and $ y_{[-R-d,1]} \in \mathcal
  B_{K+1}(Y)$ with $\phi(x_{[-R-d,0]}) = y_{[-R-d,0]}$.

  We can choose a $\phi$-preimage $\bar x_{[-R-d,1]} \in \mathcal B(X)$
  of $y_{[-R-d,1]}$. By the pigeon hole principle, there exist $-R-d\le I
  < J \le -R-1$ such that $x_I = x_J$ and $ \bar x_I = \bar x_J$. Build
  a sequence $x_1x_2x_3\cdots$ that extends $x_{[-R-d,0]}$ and $\bar
  x_2 \bar x_3\cdots$ that extends $\bar x_{[-R-d,1]}$.  Now
  $x_{[-R-d,\infty)}$ and $ \bar x_{[-R-d,\infty)}$ are right-infinite
  sequences allowed in $X$.  The point $\hat x :=
  (x_{[I,J-1]})^\infty x_{[J,-1]}.x_{[0,\infty)}$ is in $X$. The
  point $(\bar x_{[I,J-1]})^\infty\bar x_{[J,-1]}.\bar x_{[0,\infty)}$
  is also in $X$ and its image $\hat y :=
  (y_{[I,J-1]})^\infty y_{[J,-1]}.y_0y_1\phi(\bar x_2)\phi(\bar
  x_3)\cdots$ is in $Y$ and we have $\phi(\hat x_i) = \hat y_i,\,
  i\le0$.  By applying the previous lemma to $\hat x$ and $\hat y$, we can see that
  there exist $x'_{[-R,1]} \in \mathcal B(X)$ satisfying $x'_{-R} =
  \hat x_{-R} = x_{-R}$ and $\phi(x'_{[-R,1]}) = \hat y_{[-R,1]} =
  y_{[-R,1]}$.

  Define $x'_{[-R-d,-R-1]} := x_{[-R-d,-R-1]}$, then we have
  $x'_{[-R-d,1]} \in \mathcal B(X)$ and $\phi(x'_{[-R-d,1]}) =
  y_{[-R-d,1]}$.
\end{proof}
\begin{remark}
  In order to prove above lemma, one might be temped to just set $ K := R $ and start by extending the word $\pi$ to a point $x \in X$ such that $ x_{[-R,0]} = \pi $  and then somehow extend $y$ to apply Lemma~\ref{lem:i}. But choosing an appropriate $y \in Y$ requires showing that $ x_{[-\infty, -R-1]} u \in \Words(Y) $, which you cannot show at this point because $ Y $ is not yet known to be an SFT, let alone an $R$-step SFT.
\end{remark}

\begin{lem}
  For words $u,v \in \Words(Y)$ and a symbol $a \in \Words_1(Y)$ such that $uv \in \mathcal B(Y),\, va \in \mathcal
  B(Y)$ and $ |v| = K$, we have $uva \in \mathcal B(Y)$
\end{lem}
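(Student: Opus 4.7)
The plan is to lift the problem up to $X$, invoke the previous lemma to produce a preimage of $va$, and then glue this preimage onto a preimage of $uv$ at a single matching symbol, exploiting the 1-step SFT property of $X$.

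First I would pick any preimage $\tau \in \cB(X)$ of $uv$, which exists because $\phi$ is onto and $uv \in \Words(Y)$. Let $\pi \in \cB_K(X)$ denote the suffix of $\tau$ of length $K$, so that $\phi(\pi) = v$. Since $\phi(\pi) = v$ is a prefix of $va \in \cB_{K+1}(Y)$, the previous lemma yields a word $\pi' \in \cB_{K+1}(X)$ with $\pi'_1 = \pi_1$ and $\phi(\pi') = va$.

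Next comes the gluing step. Write $\tau = \tau_1 \cdots \tau_{|u|} \pi_1 \cdots \pi_K$ and form the candidate word $w := \tau_1 \cdots \tau_{|u|} \pi'_1 \pi'_2 \cdots \pi'_{K+1}$. Every two-symbol sub-word of $w$ is either a sub-word of $\tau$ or of $\pi'$, with the sole possible exception of the junction block $\tau_{|u|} \pi'_1$ (when $|u| \ge 1$); but $\pi'_1 = \pi_1$, so this junction block equals $\tau_{|u|} \pi_1$, which appears as a sub-word of $\tau \in \cB(X)$. Because $X$ is a 1-step SFT, this local two-symbol check is enough to conclude $w \in \cB(X)$. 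Applying $\phi$ gives $\phi(w) = u \cdot \phi(\pi') = uva$, and hence $uva \in \Words(Y)$.

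The only genuinely subtle point is that the previous lemma supplies only the weak agreement $\pi'_1 = \pi_1$, not that $\pi$ is a prefix of $\pi'$; fortunately a single symbol of overlap is exactly what gluing in a 1-step SFT demands, so this weak condition is already sufficient. It is worth stressing that nothing in the argument relies on any prior finite-step property of $Y$, which is essential since establishing such a property is precisely the goal of the section.
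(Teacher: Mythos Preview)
Your proof is correct and is essentially identical to the paper's own argument, with only the names of $\pi$ and $\tau$ swapped: the paper writes the preimage of $uv$ as $\pi\tau$ with $|\tau|=K$, applies the previous lemma to $\tau$ and $va$ to obtain $\tau'$ with $\tau'_1=\tau_1$, and then glues to form $\pi\tau'\in\cB(X)$ mapping to $uva$. Your added remarks about the single-symbol overlap sufficing for a 1-step SFT and about not needing any SFT property of $Y$ are exactly the points underlying the paper's terse proof.
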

\begin{proof}
  There exist $\pi,\tau \in \Words(X)$ such that $\pi\tau\in\mathcal B(X),\,
  \phi(\pi\tau) = uv$ and $ |\tau| = K$.  Applying the previous lemma to $\tau$ and
  $va$, we see that there is $\tau' \in \mathcal B(X)$ with $\tau_1 =
  \tau'_1$ and $\phi(\tau') = va$. Clearly $\pi\tau' \in \mathcal
  B(X)$ and $\phi(\pi\tau') = uva$.
\end{proof}

\begin{proposition}
  $Y$ is a $K$-step SFT.
\end{proposition}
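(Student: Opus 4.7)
The plan is to identify $Y$ with the $K$-step SFT $Y'$ over the alphabet of $Y$ determined by forbidding every word of length $K+1$ that does not appear in $\Words_{K+1}(Y)$. By construction $Y \subseteq Y'$, so everything reduces to the reverse inclusion. By a standard compactness argument, $Y' \subseteq Y$ will follow once I establish the combinatorial claim that every word $w$ of length at least $K+1$ over the alphabet of $Y$ whose length-$(K+1)$ subwords all lie in $\Words_{K+1}(Y)$ actually belongs to $\Words(Y)$.

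I would prove this claim by induction on $N = |w|$. The base case $N = K+1$ is immediate since the only length-$(K+1)$ subword of $w$ is $w$ itself. For the inductive step with $N \ge K+2$, I would split $w = w_1 \cdots w_N$ as $w = uva$ where $u = w_1 \cdots w_{N-K-1}$, $v = w_{N-K}\cdots w_{N-1}$, and $a = w_N$, so that $|u| \ge 1$ and $|v| = K$. The initial segment $uv$ has length $N - 1 \ge K+1$ and inherits the hypothesis on its length-$(K+1)$ subwords, so the induction hypothesis gives $uv \in \Words(Y)$; meanwhile $va$ is a length-$(K+1)$ subword of $w$, hence lies in $\Words_{K+1}(Y)$. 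The preceding lemma applied to $u, v, a$ then produces $uva = w \in \Words(Y)$, finishing the induction.

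To deduce $Y' \subseteq Y$, take $y \in Y'$; for each sufficiently large $n$ the word $y_{[-n,n]}$ satisfies the hypothesis of the combinatorial claim and so lies in $\Words(Y)$, giving a point $z^{(n)} \in Y$ that agrees with $y$ on $[-n,n]$. Compactness and closedness of $Y$ then yield $y \in Y$, completing the argument that $Y = Y'$ is a $K$-step SFT.

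The substantive work has already been carried out in the preceding lemma, where right continuity and the retract were leveraged into a concrete gluing statement at scale $K$. I therefore expect no real obstacle in this final step; the only modest bookkeeping is to start the induction at $N = K+1$ (so that the hypothesis is nonvacuous) and to keep $N \ge K+2$ in the inductive step so that the segment $u$ is nonempty when invoking the lemma.
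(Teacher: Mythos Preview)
Your proposal is correct and is precisely the standard unpacking of the paper's one-line proof (``It follows easily from the previous lemma''): you define the candidate $K$-step SFT, note the trivial inclusion, and use the lemma inductively to extend admissible words one symbol at a time. The only cosmetic remark is that the preceding lemma does not actually require $u$ to be nonempty, so your care about $N\ge K+2$ is harmless but unnecessary.
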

\begin{proof}
  It follows easily from the previous lemma.
\end{proof}

The proof of Theorem~\ref{thm:rc-sft-sft} is now complete. 

\begin{remark}
  Since $Y$ is now shown to be an SFT, Proposition~\ref{prop:right-continuing} applies and therefore $\phi$ is topologically equivalent to a right-eresolving code from a 1-step SFT onto another 1-step SFT.
\end{remark}


\begin{remark}
  This result implies  in particular that there is no right continuing
  code from an SFT onto a strictly sofic shift. This contrasts with
  the fact that every sofic shift is a right closing factor of an SFT.
\end{remark}

\begin{question}
  We showed that $Y$ is a $K$-step SFT where $K = R+2+(|\Words_1(X)|)^2$. Is there a better bound on $K$?
\end{question}

\section{Decreasing the retract of a right continuing map}

\begin{theorem}
  Given $\phi$ a right continuing code with a retract from a shift space $X$ onto a shift space $Y$, there exist a topological conjugacy $\psi$ from $X$ to a shift space $\bar X$ such that $\phi \circ \psi^{-1} : \bar X \to Y$ is a right continuing 1-block code with retract 0.
\end{theorem}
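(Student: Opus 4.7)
After a standard higher-block recoding, assume $\phi$ is a $1$-block code, and let $R$ denote its retract. The plan is to define the conjugacy by
\[
\psi(x)_i \;:=\; \bigl(\phi(x_i),\, x_{i-R}\bigr), \qquad \bar X := \psi(X),
\]
and then set $\bar\phi := \phi \circ \psi^{-1}$.

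The structural checks are routine. The map $\psi$ is a sliding block code with window $[i-R, i]$; its inverse, $\psi^{-1}(\bar x)_i = $ (second coordinate of $\bar x_{i+R}$), is a sliding block code with forward window of size $R$; so $\psi$ is a topological conjugacy and $\bar X$ is a genuine shift space. Because the first coordinate of $\psi(x)_i$ is exactly $\phi(x)_i$, the composition $\bar\phi = \phi \circ \psi^{-1}$ is the $1$-block code that reads off the first coordinate. Right continuity passes from $\phi$ to $\bar\phi$ through the conjugacy.

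The heart of the argument is to verify retract $0$ for $\bar\phi$. Given $\bar x \in \bar X$ and $y \in Y$ with $\bar\phi(\bar x)_i = y_i$ for $i \le 0$, set $x := \psi^{-1}(\bar x)$, so $\phi(x_i) = y_i$ for $i \le 0$. The retract-$R$ hypothesis on $\phi$ produces $x' \in X$ with $\phi(x') = y$ and $x'_j = x_j$ for $j \le -R$. Setting $\bar x' := \psi(x')$, I then compute, for each $i \le 0$,
\[
\bar x'_i = \bigl(\phi(x'_i),\, x'_{i-R}\bigr) = \bigl(y_i,\, x_{i-R}\bigr) = \bigl(\phi(x_i),\, x_{i-R}\bigr) = \bar x_i,
\]
using $\phi(x') = y$ for the first equality and $i - R \le -R$ for the second.

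The conceptual obstacle is designing $\psi$ correctly. A naive higher-block presentation $\psi(x)_i = x_{[i-R, i]}$ gives a $1$-block inverse, in which case $\bar x'_i = \bar x_i$ for $i \le 0$ would force $x'_j = x_j$ for every $j \le 0$; the retract-$R$ property only supplies this for $j \le -R$, and the argument collapses. The key design decision is to replace the first coordinate by the coarser datum $\phi(x_i)$ and push the fine datum $x_i$ into the second coordinate $R$ positions to the right: this makes $\psi^{-1}$ need a forward window of exactly $R$ symbols, precisely compensating for the $R$-symbol slack of the retract.
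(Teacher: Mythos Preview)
Your proof is correct and follows essentially the same approach as the paper: recode so that the symbol at position $i$ records $x_{i-R}$ together with enough $\phi$-data at position $i$ to make $\bar\phi$ a $1$-block code, then verify retract $0$ by the same coordinate computation. The only difference is cosmetic---the paper stores the whole block $(\phi x)_{[i-R,i]}$ where you store only $\phi(x_i)$; your encoding is slightly leaner but the mechanism and the verification are identical.
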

\begin{proof}
  We may assume that $\phi$ is a 1-block code and its retract is $R$. 

  Define a block code $\psi$ on $X$ by
  \[ (\psi x)_i = (x_{i-R}, (\phi x)_{[i-R,i]}) \] for all $x \in X$ and $i \in \Z$. This code is injective, therefore a conjugacy onto its image $\bar X$.
  
  Let $\bar{\phi} := \phi \circ \psi^{-1}$. This is a 1-block code because for $\bar{x} \in \bar{X}$ and $x := \psi^{-1}(\bar{x})$ we have
  \begin{align*}
    (\bar\phi \bar x)_0 &= (\phi x)_0 \\
    &= F((\psi x)_0) \\
    &= F(\bar x_0)
  \end{align*}
  where $F$ is a function that maps $(a, b_0 b_1 \ldots b_R)$ to $b_R$.
  
  It remains to show that $\bar\phi$ is right continuing with retract 0.
  
  Suppose that we are given $\bar x \in \bar X$, $y \in Y$ with $(\bar\phi\bar x)_{[-\infty,0]} = y_{[-\infty,0]} $.
  Then the point $x := \psi^{-1} \bar x$ satisfies $(\phi x)_{[-\infty,0]} = y_{[-\infty,0]}$.
  But since $\phi$ has retract $R$, there is $z \in X$ satisfying $x_{[-\infty,-R]} = z_{[-\infty,-R]}$ and $\phi z = y$.
  The point $\bar z := \psi z$ satisfies the desired properties: for $i \le 0$
  \begin{align*}
    \bar z_i &= (z_{i-R}, (\phi z)_{[i-R,i ]}) \\
    &= (z_{i-R}, y_{[i-R,i]}) \\
    &= (x_{i-R}, (\phi x)_{[i-R,i]}) \\
    &= \bar x_i
  \end{align*}
  and $\bar \phi \bar z = y$.
\end{proof}


For the next theorem, we need some definitions.

\begin{definition}
  A  block code from a shift space onto another shift space is \emph{bi-continuing} if it is left continuing \emph{and} right continuing.
\end{definition}

\begin{definition}
  A bi-continuing code $\phi$ from a shift space $X$  onto another shift space $Y$ is said to have \emph{retracts} $n_1, n_2 \in \N$ if $n_1$ is a retract of $\phi$ as a right continuing code and if $n_2$ is a retract of $\phi$ as a left continuing code.
\end{definition}

\begin{theorem} 
  Given a bi-continuing factor map $ \phi $  with retracts from a shift space $X$ to a shift space $Y$,
  there are conjugacies $ \psi : X \to \bar X $ and $ \theta : Y \to \bar Y $
  such that $ \theta \circ \phi \circ \psi^{-1} : \bar X \to \bar Y $ is a bi-continuing 1-block factor map with retracts 0, 0.
\end{theorem}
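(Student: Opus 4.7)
My plan is to extend the construction from the previous theorem---where only $X$ was recoded to kill the right retract---by recoding both $X$ and $Y$ simultaneously, carefully calibrating the window widths so that the right and left retracts both collapse to $0$ in one stroke.

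Assuming (by the usual recoding) that $\phi$ is a $1$-block code, let $R_1$ and $R_2$ denote its right and left retracts, and set $M := R_1 + R_2$. I will define
\[
(\psi x)_i := \bigl(x_{[i-R_1,\, i+R_2]},\; (\phi x)_{[i-M,\, i+M]}\bigr),
\qquad
(\theta y)_i := y_{[i-M,\, i+M]},
\]
and take $\bar X := \psi(X)$ and $\bar Y := \theta(Y)$. Both maps are induced by sliding block codes and are clearly injective (the first coordinate of $(\psi x)_i$ recovers $x_{i-R_1}$, and the center of $(\theta y)_i$ recovers $y_i$), so each is a conjugacy onto its image. By design, $\bar\phi := \theta \circ \phi \circ \psi^{-1}$ is a $1$-block code, because the second component of $(\psi x)_i$ literally equals the window $(\phi x)_{[i-M, i+M]}$ that appears as $(\theta \phi x)_i$.

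To verify right retract $0$, suppose $\bar x \in \bar X$ and $\bar y \in \bar Y$ satisfy $(\bar\phi \bar x)_i = \bar y_i$ for all $i \le 0$. Setting $x := \psi^{-1}\bar x$ and $y := \theta^{-1}\bar y$, this translates to $(\phi x)_j = y_j$ for all $j \le M$. Applying the right retract $R_1$ of $\phi$ at this shifted position produces $z \in X$ with $\phi z = y$ and $z_j = x_j$ for all $j \le M - R_1 = R_2$. One then checks that $\psi(z)$ agrees with $\bar x$ at every $i \le 0$: the first component lies in the window $[i-R_1, i+R_2] \subseteq (-\infty, R_2]$ where $z$ matches $x$, and the second component is automatic since $\phi z = y$. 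The left case runs symmetrically, using $M - R_2 = R_1$.

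The delicate design choice---and the main step I would want to double check---is the deliberate asymmetry between the narrow $X$-window $[i-R_1, i+R_2]$ and the wider $Y$-window $[i-M, i+M]$. A naive symmetric recoding with a single window width would fail, because the hypothesis on the $\bar Y$-side at indices $\le 0$ must furnish enough information about $y$ out to position $M$ for the retract of $\phi$ to propagate to matching $z$ with $x$ up to position $R_2$, which is exactly what the first component of $(\psi x)_0$ demands. Widening the $Y$-window by $R_1 + R_2$ on each side is precisely what makes both retracts vanish simultaneously.
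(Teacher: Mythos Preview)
Your proof is correct and follows the same strategy as the paper: recode both $X$ and $Y$ to higher-block presentations with suitably chosen windows, then verify retract $0$ by unwinding the definitions. The paper's construction is a bit simpler---it takes $R := \max(R_1,R_2)$ so that both retracts may be assumed equal, then sets $(\psi x)_i = (x_i,\, (\phi x)_{[i-R,\,i+R]})$ and $(\theta y)_i = y_{[i-R,\,i+R]}$, so the $X$-component is a single symbol rather than a block and your careful asymmetric calibration with $M = R_1 + R_2$, while correct, is not actually needed.
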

\begin{proof}
  We may assume $\phi$ is a 1-block code with retracts $R, R$.
  Let $\bar X$ and $\bar Y$ be images of block codes $\psi$ and $\theta$ defined by
  \begin{align*}
    (\psi x)_i &= (x_i, (\phi x)_{[i-R,i+R]}) \\
    (\theta y)_i &= y_{[i-R,i+R]}
  \end{align*}
  for $i\in \Z$, $x \in X$ and $y \in Y$.
  They are easily checked to be injective, hence they are conjugacies.
  
  Let $ \bar\phi := \theta \circ \phi \circ \psi^{-1} : \bar X \to \bar Y $, then it is a 1-block code since for all $ \bar x \in \bar X $ and $ x := \psi^{-1}\bar x $ we have
  \begin{align*}
    (\bar\phi\bar x)_0 &= (\theta \phi x)_0 \\
    &= (\phi x)_{[i-R,i+R]} \\
    &= F((\psi x)_0) \\
    &= F(\bar x_0)
  \end{align*}
  where the function $F$ maps $ (a,b) $ to $b$.

  To show that $\bar \phi$ is right continuing with retract 0, suppose we are given $ \bar x \in \bar X $ and $ \bar y \in \bar Y $ with $ (\bar \phi \bar x)_{[-\infty,0]} = \bar y_{[-\infty,0]} $ and let $ x = \psi^{-1} \bar x $ and $ y = \theta^{-1} \bar y $. Then we have $ y_{[-\infty,R]} = (\phi x)_{[-\infty,R]} $ and since $ \phi $ is right continuing with retract $R$, there is $ z \in X $ such that $ x_{[-\infty,0]} = z_{[-\infty,0]} $  and $ \phi z = y $. Let $ \bar z = \psi z $, then we have for $i \le 0$, 
  \begin{align*}
    \bar x_i &= (x_i, (\phi x)_{[i-R,i+R]}) \\
    &= (x_i, y_{[i-R,i+R]}) \\
    &= (z_i, (\phi z)_{[i-R,i+R]}) \\
    &= \bar z_i
  \end{align*}
  and that $ \bar \phi \bar z = \bar y $ 

  Similarly, $\bar \phi$ is also left continuing with retract 0.
\end{proof}

\bibliographystyle{amsplain}
\providecommand{\bysame}{\leavevmode\hbox to3em{\hrulefill}\thinspace}
\providecommand{\MR}{\relax\ifhmode\unskip\space\fi MR }
\providecommand{\MRhref}[2]{%
  \href{http://www.ams.org/mathscinet-getitem?mr=#1}{#2}
}
\providecommand{\href}[2]{#2}

\end{document}